\documentclass{amsart}
 \usepackage{ulem}
 \usepackage{setspace}
\usepackage{tabularx}
\usepackage{amsmath,amsfonts,amssymb,xcolor,latexsym,euscript,gensymb}
\usepackage{enumerate}
\usepackage[numbers]{natbib}

\newtheorem{lemma}{\bf Lemma}[section]

\newtheorem{teor}[lemma]{\bf Theorem}

\newtheorem{cor}[lemma]{\bf Corollary}

\newtheorem{problem}[lemma]{\bf Problem}

\input xy
\xyoption{all}

\title[]{On profinite groups with the Magnus Property}

\author{Claude Marion} 
\address{Claude Marion. Centro de Matem\'{a}tica, Faculdade de Ci\^{e}ncias, Universidade do Porto, Rua do Campo Alegre 687, 4169-007 Porto, Portugal
\newline
ORCID: https://orcid.org/0000-0002-3802-1750}
\email{claude.marion@fc.up.pt}

\author{Pavel Zalesskii}
\address{Pavel Zalesskii. Departamento de Matem\'atica, Universidade de Bras\'ilia, Campus 
Universit\'ario Darcy Ribeiro, Bras\'ilia-DF, 70910-900, Brazil. \newline
ORCID: https://orcid.org/0000-0002-2015-239X}
\email{pz@mat.unb.br}

\date{}

\subjclass[2020]{20E18}

\keywords{Profinite groups, Magnus property}
\thanks{
The first author acknowledges the support from the Centre of Mathematics of the University of
Porto which is financed by national funds through the Funda\c{c}\~{a}o para a Ci\^{e}ncia e a Tecnologia,
I.P., under the project with references UIDB/00144/2020 and UIDP/00144/2020. The second author acknowledges the support of Capes (Capes-print)}

\begin{document}

\setlength{\parskip}{2mm}

\begin{abstract}
A group is said to have the Magnus Property (MP) if whenever two elements have the same normal closure then they are conjugate or inverse-conjugate.  We  show that a profinite MP group $G$ is prosolvable and any quotient of  it is again MP.  
As corollaries we obtain that the only prime divisors of $|G|$ are $2$, $3$, $5$ and $7$, and the second derived subgroup of $G$ is pronilpotent. We also show that  the inverse limit of an inverse system of profinite MP groups is again MP.  Finally when $G$ is finitely generated, we establish that $G$ must in fact be finite. 
\end{abstract}

\maketitle

\section{Introduction}
Given a group $G$,  elements $x,y\in G$ and  a subset $S\subseteq G$, we let $x^G$ denote the conjugacy class of $x$ in $G$, and $\langle S\rangle$ and $\langle S\rangle^G$ denote respectively the subgroup of $G$ abstractly generated by $S$ and the normal closure of $S$ in $G$  which is the smallest normal subgroup of $G$ containing $S$. Note that $\langle x\rangle^G=\langle x^G\rangle$.

A group $G$ has the \textit{Magnus Property} (MP) if whenever $x,y \in G$ are such that $\langle x^G \rangle = \langle y^G \rangle$, then $x$ is conjugate in $G$ to $y$ or to $y^{-1}$.
We also say that a group $G$ has the \textit{Strong Magnus Property} (SMP) if whenever $x,y \in G$ are such that $\langle x^G \rangle = \langle y^G \rangle$, then $x$ is conjugate in $G$ to $y$. In other words, an SMP group is an MP group in which every element is conjugate to its own inverse. Magnus' original motivation to study MP groups is, as he showed in 1930 in \cite{Magnus30}, that free groups 
have this property. This can also be formulated by saying that if $G$ is any $1$-relator group realized as quotient of the free group $F$, say $G \cong F/\langle x \rangle^F$, then any other relator (on the same generators) realizing the same group must be either conjugate or inverse-conjugate to $x$.

This century, the study of groups having the Magnus Property gained some  further interest. For example, Bogopolski proved in \cite{B} that the fundamental group of  a closed orientable surface is MP.  About a decade later,  Klopsch and Kuckuck showed in \cite{KK} that a direct product of free groups is MP. More recently, Klopsch, Mendon\c{c}a and Petschick characterized in \cite{KMP} the free polynilpotent groups which are MP.  Concerning finite groups,  Garonzi and the first author showed in \cite{GM} that a finite MP group is solvable and there are in fact only eight finite primitive groups that are MP. Moreover, using essentially the latter classification, they established several other properties of a finite MP group $G$:  a prime divisor $p$ of $|G|$ belongs to \{2,3,5,7\} (and if $G$ is nilpotent then a prime divisor of $|G|$ belongs to $\{2,3\}$), $G$ has Fitting height at most 2,  the second derived subgroup of $G$ is nilpotent, and a chief factor of $G$ has order $2$, $3$, $4$, $5$, $7$ or $9$. 

Following the study in \cite{GM} of finite groups which are MP, it is natural to investigate the  profinite groups which are MP. 
We propose to perform this task in this paper. Note first that the profinite and pro-$p$ completions do not preserve Magnus property as was observed in \cite{BZ}.  This indicates that with respect to the Magnus property profinite groups should have similar features as MP finite groups.

By \cite[Proposition 1.1]{GM} a finite MP group is solvable. Our first result establishes the analogous statement for profinite groups. 

\begin{teor}\label{t:profMPpros}
Every profinite MP group is prosolvable. 
\end{teor}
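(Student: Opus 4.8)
The plan is to argue through the finite continuous quotients of $G$, since $G$ is prosolvable if and only if every such quotient is solvable. One clean route is to show that every finite quotient $G/N$, with $N$ an open normal subgroup, is again MP and then to invoke \cite[Proposition 1.1]{GM} to conclude that $G/N$ is solvable. I would instead proceed by contraposition and directly produce a violation of the Magnus Property of $G$ from any non-solvable finite quotient; this keeps the argument self-contained and, crucially, lets me work with elements of my own choosing.

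So suppose that $G/L$ is non-solvable for some open normal subgroup $L$, and let $M$ be the preimage in $G$ of the solvable radical of $G/L$. Then $\bar G := G/M$ is non-solvable with trivial solvable radical, so it has a non-abelian minimal normal subgroup $V = K/M \cong S^{k}$, where $S$ is a non-abelian finite simple group and $K$ is an open normal subgroup of $G$ containing $M$. Because $V$ is normal, every nontrivial $\bar v \in V$ has $\langle \bar v \rangle^{\bar G} \le V$; this being a nontrivial normal subgroup of $\bar G$, minimality of $V$ forces $\langle \bar v \rangle^{\bar G} = V$. Fixing $\bar x, \bar y \in V$ of different orders (for instance one of order $2$ and one of odd prime order, both of which occur in $S^{k}$) we obtain $\langle \bar x \rangle^{\bar G} = V = \langle \bar y \rangle^{\bar G}$, while $\bar x$ is conjugate in $\bar G$ neither to $\bar y$ nor to $\bar y^{-1}$, since conjugation and inversion preserve orders. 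In particular, when $M = 1$ this computation takes place inside $G$ itself and already contradicts the Magnus Property of $G$; thus a profinite MP group can carry no finite non-abelian minimal closed normal subgroup, and this is precisely the configuration I aim to force.

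The remaining and main task is to transfer this obstruction from $\bar G = G/M$ back to $G$ when $M \ne 1$. Concretely, I must exhibit $x, y \in G$ whose closed normal closures coincide, $\langle x \rangle^{G} = \langle y \rangle^{G}$, while their images in $\bar G$ have different orders: the Magnus Property of $G$ would then make $x$ conjugate to $y$ or to $y^{-1}$, whence their images in $\bar G$ would be conjugate or inverse-conjugate and so of equal order, a contradiction. Choosing lifts $x, y \in K$ of the elements $\bar x, \bar y$ above, one has $\langle x \rangle^{G} \le K$ and $\langle x \rangle^{G} M = K = \langle y \rangle^{G} M$, so that $A := \langle x \rangle^{G}$ and $B := \langle y \rangle^{G}$ have the same image $V$ in $\bar G$; the difficulty is that $A$ and $B$ may differ inside $M$, so that $\langle x \rangle^{G} \ne \langle y \rangle^{G}$. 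Overcoming this --- adjusting the representatives so that their normal closures agree on the part lying in $M$ while retaining control of the orders of their images in $V$ --- is the crux of the proof. I expect to handle it by a compactness argument: solving the matching problem in each finite quotient $G/U$ with $U \le M$ open normal, and then extracting coherent representatives from the inverse limit presentation of $G$ so as to realize $\langle x \rangle^{G} = \langle y \rangle^{G}$ exactly. This lifting of an equality of normal closures from the finite quotients up to $G$ is the step I anticipate to be the principal obstacle.
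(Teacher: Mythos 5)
Your reduction to a non-abelian minimal normal subgroup $V=K/M\cong S^k$ of $\bar G=G/M$ and the observation that all nontrivial elements of $V$ have normal closure $V$ in $\bar G$ are fine, but the proof stops exactly at its crux: you never produce lifts $x,y\in K$ with $\langle x\rangle^{G}=\langle y\rangle^{G}$, and you say yourself that this is ``the step I anticipate to be the principal obstacle.'' The proposed fix --- solve the matching problem in each finite quotient $G/U$ and conclude by compactness --- is not carried out and is not routine: with $M$ taken to be the preimage of the solvable radical there is no reason that, in a given finite quotient, the lifts can be adjusted inside $M$ so that their normal closures coincide; that nonemptiness in each finite layer is essentially the whole difficulty, so the compactness step has nothing to feed on. As written, the argument only shows that $G/M$ is not MP, which does not contradict the hypothesis since one does not yet know that quotients of profinite MP groups are MP (indeed, in the paper that fact, Theorem \ref{t:quo}, is deduced \emph{from} Theorem \ref{t:profMPpros}, so your first suggested route would be circular).

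The paper avoids the lifting problem entirely by a different choice of subgroups: it takes $K$ to be the kernel of the \emph{maximal prosolvable} quotient of $G$ and sets $M(K)$ to be the intersection of all maximal closed normal subgroups of $K$. Then $K/M(K)$ is a direct product of finite nonabelian simple groups, and --- this is the key input, \cite[Proposition 8.3.6]{RZ}, a Frattini-type statement for normal generation --- \emph{every} lift to $K$ of a generator of $K/M(K)$ as a $G$-group satisfies $\langle x\rangle^{G}=K$ exactly. So the equality of normal closures is automatic for all lifts, MP forces all such lifts to be conjugate or inverse-conjugate, and one gets a contradiction by choosing two generators whose first components lie in Sylow subgroups for distinct primes. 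If you want to complete your argument along your own lines, you need either this Frattini-type lemma (with the correct choice of $K$ and $M(K)$, for which $M$ = solvable radical preimage is not a substitute) or an independent solution of the matching problem in finite quotients; without one of these the proof is incomplete.
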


The proof of this theorem is not  a simple projective limit argument, because to perform such argument one needs to know in advance that MP property is preserved by quotients and inverse limits.

In general the Magnus Property is not preserved under taking quotients. Indeed  by \cite{Magnus30} every free group is MP, also every group is a quotient of a free group, however as shown in \cite{GM}, in general a finite group is not MP. 
In \cite[Proposition 2.4]{KMP}, Klopsch, Mendon\c{c}a and Petschick give a sufficient condition for a quotient $G/N$ of a MP group $G$ to be MP, and as a consequence they derive that every quotient of a finite MP group is MP.  This latter observation is a crucial ingredient in showing that every finite MP group is solvable (see \cite[Proposition 1.1]{GM}). However one cannot apply \cite[Proposition 2.4]{KMP} to show that every quotient of a profinite MP group is MP. 
We have to use in fact Theorem \ref{t:profMPpros} and essentially an application of Nakayama's lemma, to establish the result. When considering quotients of a profinite group we mean quotients by a closed normal subgroup. 

\begin{teor}\label{t:quo}
Every quotient of a profinite (S)MP group is (S)MP.
\end{teor}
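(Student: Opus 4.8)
The plan is to prove Theorem~\ref{t:quo}, that every quotient $G/N$ of a profinite (S)MP group $G$ by a closed normal subgroup $N$ is again (S)MP. Let me think about the structure. The key difficulty, as the authors flag, is that the naive approach fails: unlike the finite case, we cannot directly invoke the sufficient condition of \cite[Proposition 2.4]{KMP}. Let me first understand what the Magnus property demands after passing to a quotient.

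Let me set up the problem. Take $\bar{x} = xN$ and $\bar{y} = yN$ in $G/N$ with $\langle \bar{x}^{G/N}\rangle = \langle \bar{y}^{G/N}\rangle$. I want to conclude that $\bar x$ is conjugate to $\bar y$ or $\bar y^{-1}$ in $G/N$. The normal closure of $\bar x$ in $G/N$ is $\langle x^G, N\rangle / N$, i.e. the image of $\langle x^G\rangle N$. So the hypothesis says $\langle x^G\rangle N = \langle y^G\rangle N$ as closed normal subgroups of $G$. If I could lift this to $\langle x^G\rangle = \langle y^G\rangle$ I would be done by MP of $G$ — but of course that is false in general.

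So the real work must be to exploit Theorem~\ref{t:profMPpros}: $G$ is prosolvable, hence so is $G/N$, and we are in a very restricted world. The hint in the text mentions "essentially an application of Nakayama's lemma". Here is how I imagine that enters. The normal closures $\langle x^G\rangle$ and $\langle y^G\rangle$ are topologically finitely generated as normal subgroups — indeed each is the normal closure of a single element. Nakayama-type lemmas for profinite (pro-$p$ or prosolvable) modules let one deduce that a submodule equals the whole module once it does so modulo the Frattini-type subgroup. The strategy is presumably: reduce the equality $\langle x^G\rangle N = \langle y^G\rangle N$ modulo a suitable small normal subgroup (the Frattini subgroup of $N$, or $N$ times a Frattini layer) where the prosolvable/module structure lets Nakayama's lemma force $\langle x^G\rangle = \langle y^G\rangle$ outright, and then apply the MP property of $G$ itself. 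More concretely, I would try to show that modulo $N$ one may adjust $x$ within its coset so that $\langle x^G \rangle \subseteq \langle y^G\rangle N$ becomes an equality of normal closures after controlling the contribution of $N$; the prosolvability guarantees the requisite module-theoretic finiteness and the absence of the obstructions that appear for free groups.

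The steps I would carry out, in order, are: first, record that by Theorem~\ref{t:profMPpros} $G$ is prosolvable and reformulate the hypothesis as the equation $\langle x^G\rangle N = \langle y^G\rangle N$ of closed normal subgroups. Second, reduce to the case where $N$ is contained in a convenient characteristic subgroup and set up the relevant finitely generated profinite module, so that a Nakayama/Frattini argument becomes available. Third, apply Nakayama's lemma to upgrade the equality modulo a Frattini layer to an honest equality of normal closures in a finite quotient, invoking the preservation of MP along the inverse system and the fact (to be used from \cite{KK} or \cite{GM}) that in the finite prosolvable quotients MP is inherited by quotients. Fourth, assemble these finite-level conclusions via an inverse limit to produce a single element, conjugate or inverse-conjugate to $\bar y$, that equals $\bar x$; the compactness of $G$ and the closedness of conjugacy classes make this limit argument legitimate. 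Finally, I would handle the SMP refinement by observing that the same conjugacy produced in each step can be taken inverse-free once $G$ has SMP, since every element of $G$ is then conjugate to its own inverse and this property manifestly descends to the quotient.

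The step I expect to be the main obstacle is the second-and-third: making the Nakayama-type argument genuinely bridge the gap between equality-modulo-$N$ and equality-of-normal-closures. The danger is that $\langle x^G\rangle N = \langle y^G\rangle N$ carries strictly less information than $\langle x^G\rangle = \langle y^G\rangle$, and Nakayama's lemma only closes this gap when the ambient structure is controlled — which is exactly why prosolvability (Theorem~\ref{t:profMPpros}) is indispensable and why a bare projective-limit argument cannot work, as the authors emphasize. I would expect the technical heart to be choosing the correct filtration of $N$ and the correct module over the relevant finite quotient group so that the Frattini quotient detects the equality faithfully, and then verifying that the conjugating or inverting elements chosen at successive finite levels can be made compatible enough to pass to the inverse limit.
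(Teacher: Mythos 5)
Your high-level instincts are right: the paper's proof does use prosolvability (Theorem~\ref{t:profMPpros}), does reduce the problem to an honest equality of normal closures in $G$ itself so that the MP property of $G$ can be applied directly, and does achieve this by adjusting the representative within its coset -- exactly the move you describe as what you ``would try to show.'' But that move \emph{is} the theorem, and your proposal leaves it as an acknowledged black box. The actual mechanism is not a filtration of the kernel $N$, as you suggest, but a Frattini-type quotient of the normal closure itself: one sets $K=\langle g\rangle^G$ and $M(K)$ equal to the intersection of all maximal closed normal subgroups of $K$, observes that by prosolvability $K/M(K)$ is a direct sum of simple $\mathbb{F}_pG$-modules and is cyclic as a $G$-module (generated by the image of $g$), and then proves a genuine module-theoretic lifting statement: for an epimorphism of cyclic $\mathbb{F}_pG$-modules, every generator of the target lifts to a generator of the source. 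This requires three layers of work that your sketch does not supply -- the homogeneous semisimple case via Artin--Wedderburn (Lemma~\ref{l:first}), the finite case via Nakayama applied to $M/JM$ (Lemma~\ref{l:second}), and the profinite case via a compactness argument on closed sets of liftings (Corollary~\ref{c:modules}). Applying this to the epimorphism $\langle g\rangle^{G}/M(\langle g\rangle^{G})\to\langle \overline{h}\rangle^{\overline{G}}/M(\langle \overline{h}\rangle^{\overline{G}})$ gives $\langle h\rangle^G M(\langle g\rangle^G)=\langle g\rangle^G$, hence $\langle h\rangle^G=\langle g\rangle^G$, and MP of $G$ finishes the proof in one step; no inverse-limit assembly of conjugating elements is needed.

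Your proposed steps three and four, as written, also contain a circularity: you invoke ``the fact that in the finite prosolvable quotients MP is inherited by quotients'' and then assemble conjugacy conclusions from finite quotients of $G/N$. But the finite (continuous) quotients of $G$ are not known to be MP a priori -- that is precisely an instance of the statement being proved -- so the finite-group inheritance result from \cite{KMP} or \cite{GM} cannot be applied to them. The paper avoids this entirely by never descending to finite quotients for the conjugacy step: once the normal closures are shown equal in $G$ itself, the MP hypothesis on $G$ is used directly, and the (S)MP refinement is immediate. So the proposal identifies the correct target and the correct tools by name, but the technical heart (the generator-lifting lemmas) is missing and the fallback route sketched in its place would not close.
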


In particular the proof of Theorem \ref{t:quo} gives another proof to that given in \cite{KMP} for the fact that every quotient of a finite MP group is MP.

Combining Theorem \ref{t:quo} with the work in \cite{GM} on finite groups which are MP, we obtain two further corollaries.

\begin{cor}\label{c:pdivisors}
Let $G$ be a nontrivial profinite MP group and let $p$ be a prime divisor of $|G|$. The following assertions hold.
\begin{enumerate}[(i)]
\item We have $p\in \{2,3,5,7\}$.   
\item If $G$ is SMP or pronilpotent then $p\in \{2,3\}$. 
\item If $G$ is SMP and pronilpotent then $p=2$. 
\end{enumerate}
\end{cor}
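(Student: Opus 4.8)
The plan is to reduce every assertion to the corresponding statement about \emph{finite} groups proved in \cite{GM}, by transporting the prime $p$ into a finite quotient where Theorem~\ref{t:quo} applies. The structural fact I would recall first is that the order $|G|$ of a profinite group is a supernatural (Steinitz) number, and that a prime $p$ divides $|G|$ if and only if $p$ divides the ordinary order $|G/N|$ of some finite continuous quotient, i.e. for some open normal subgroup $N\trianglelefteq G$. Thus I begin by fixing such an $N$ with $p\mid|G/N|$; by Theorem~\ref{t:quo} the finite group $G/N$ inherits whichever of MP or SMP is assumed of $G$.

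Part (i) is then immediate: $G/N$ is a finite MP group, so by \cite{GM} every prime dividing $|G/N|$, in particular $p$, lies in $\{2,3,5,7\}$. For the pronilpotent halves of (ii) and (iii) I would additionally use that a finite continuous quotient of a pronilpotent group is nilpotent (a pronilpotent group is the product of its Sylow pro-$p$ subgroups, so $G/N$ is a product of finite $p$-groups). Hence when $G$ is pronilpotent, $G/N$ is a finite \emph{nilpotent} MP group, and the nilpotent case of \cite{GM} forces $p\in\{2,3\}$, giving the pronilpotent half of (ii).

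For the SMP hypotheses I would argue from ambivalence, an SMP group having every element conjugate to its inverse by definition. For (iii), with $G$ SMP and pronilpotent the quotient $G/N$ is finite, nilpotent and SMP; writing $G/N=P\times Q$ with $P$ its Sylow $p$-subgroup, a nontrivial element $x$ of order $p$ lies in $P$, and since conjugation in $G/N$ preserves the direct factor $P$, the relation $x\sim x^{-1}$ in $G/N$ holds if and only if it holds inside $P$. Were $p$ odd, $P$ would have odd order, in which the identity is the only element conjugate to its inverse; this contradiction yields $p=2$. For the SMP half of (ii) (excluding $5$ and $7$) I would invoke the finite SMP prime bound of \cite{GM}. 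The proof is thus essentially a reduction, all arithmetic content being imported from the finite theory, and the one point I expect to need care is confirming that this finite SMP restriction to $\{2,3\}$ is available in \cite{GM} in the required form, or else re-deriving it by discarding from their list of finite primitive MP groups those containing a non-real element of order $5$ or $7$.
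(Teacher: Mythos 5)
Your proposal is correct and follows essentially the same route as the paper: reduce to a finite continuous quotient $G/N$ with $p\mid |G/N|$, note that $G/N$ inherits (S)MP by Theorem~\ref{t:quo} (and nilpotency when $G$ is pronilpotent), and import the prime restrictions from \cite{GM}. The finite SMP bound you were unsure about is indeed available in the required form, namely \cite[Corollary 1.6]{GM}, which is exactly what the paper cites for all three parts, so your ad hoc ambivalence argument for (iii), while valid, is not needed.
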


Recall that given a profinite group $G$ and a nonnegative integer $n$, the $n$-th derived subgroup $G^{(n)}$ of $G$ is defined recursively by  setting $G^{(0)}$ to be $G$ and $G^{(n+1)}$ to be the closure of the abstract derived subgroup of $G^{(n)}$.

\begin{cor}\label{c:sdpn}
Let $G$ be a profinite MP group. Then the second derived subgroup $G^{(2)}$ of $G$ is pronilpotent. 
\end{cor}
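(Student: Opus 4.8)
The plan is to deduce this from the finite case by a projective-limit argument, exploiting the fact that pronilpotency of a profinite group can be detected on its finite continuous quotients. Recall that $G^{(2)}$ is a closed subgroup of $G$, so it carries the subspace topology, and as $N$ ranges over the open normal subgroups of $G$ the subgroups $G^{(2)}\cap N$ form a fundamental system of open normal subgroups of $G^{(2)}$, since $\bigcap_N N=1$. Hence $G^{(2)}=\varprojlim_N G^{(2)}/(G^{(2)}\cap N)$, and it suffices to prove that each quotient $G^{(2)}/(G^{(2)}\cap N)$ is finite and nilpotent.

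The finiteness is immediate, as $G^{(2)}\cap N$ is open (hence of finite index) in $G^{(2)}$. For the nilpotency, I would fix an open normal subgroup $N$ of $G$ and consider the quotient map $\pi_N\colon G\to G/N$. By Theorem \ref{t:quo} the finite group $G/N$ is MP, so by \cite{GM} its second derived subgroup $(G/N)^{(2)}$ is nilpotent. Since $G/N$ is finite and discrete, for a continuous surjection one has $\pi_N(G^{(2)})=\overline{\pi_N(G^{(2)})}=(G/N)^{(2)}=G^{(2)}N/N$, that is, the image of the (topological) second derived subgroup is the second derived subgroup of the image. The isomorphism theorem then gives $G^{(2)}/(G^{(2)}\cap N)\cong G^{(2)}N/N=(G/N)^{(2)}$, which is nilpotent.

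Combining these observations, $G^{(2)}$ is an inverse limit of finite nilpotent groups and is therefore pronilpotent, as required; equivalently, every finite continuous quotient of $G^{(2)}$ is a quotient of some nilpotent $G^{(2)}/(G^{(2)}\cap N)$ and hence nilpotent. I do not expect any genuine obstacle here: all the substance sits in Theorem \ref{t:quo} together with the finite result of \cite{GM}, and the only points needing a little care are the verification that the $G^{(2)}\cap N$ form a cofinal system of open normal subgroups of $G^{(2)}$ and the identification of $(G/N)^{(2)}$ with $G^{(2)}/(G^{(2)}\cap N)$.
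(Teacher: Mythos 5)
Your proposal is correct and follows essentially the same route as the paper: apply Theorem \ref{t:quo} to get that each finite quotient $G/N$ is MP, invoke the result of \cite{GM} that the second derived subgroup of a finite MP group is nilpotent, and use that a continuous epimorphism carries $G^{(2)}$ onto $(G/N)^{(2)}$ so that $G^{(2)}$ is an inverse limit of finite nilpotent groups. You merely spell out the inverse-limit bookkeeping that the paper leaves implicit.
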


It is natural to ask whether the inverse limit of an inverse system of profinite (S)MP groups is again (S)MP. We answer  this question positively. 
\begin{teor}\label{t:proMP}
The inverse limit of an inverse system of profinite (S)MP groups is (S)MP.
\end{teor}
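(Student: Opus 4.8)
The plan is to detect the required (inverse\nobreakdash-)conjugacy in $G=\varprojlim_{i\in I}G_i$ from the corresponding statements in the individual (S)MP groups $G_i$, and then to glue these together by a compactness argument. Let $\pi_i\colon G\to G_i$ denote the projections. It is no loss to assume the bonding maps, and hence each $\pi_i$, are surjective: this is the setting in which the hypothesis is actually used, since for a general system one would only control the closed subgroups $\pi_i(G)\le G_i$, which need not be (S)MP. I record the two standard facts I will rely on: each $\pi_i$ is surjective, and $\bigcap_{i\in I}\ker\pi_i=1$, the latter because the canonical map $G\to\prod_{i\in I}G_i$ is injective.

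Now suppose $x,y\in G$ satisfy $\langle x\rangle^{G}=\langle y\rangle^{G}$. Since $\pi_i$ is a continuous surjection, it carries the closed normal closure $\langle x\rangle^{G}$ onto the closed normal closure $\langle\pi_i(x)\rangle^{G_i}$, and likewise for $y$; hence $\langle\pi_i(x)\rangle^{G_i}=\langle\pi_i(y)\rangle^{G_i}$ in $G_i$ for every $i$. As $G_i$ is (S)MP, $\pi_i(x)$ is conjugate in $G_i$ to $\pi_i(y)$ (in the SMP case), respectively to $\pi_i(y)$ or to $\pi_i(y)^{-1}$ (in the MP case).

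For the gluing I would set $C_i=\{g\in G:\pi_i(x^g)=\pi_i(y)\}$ and, in the MP case, $C_i^{-}=\{g\in G:\pi_i(x^g)=\pi_i(y)^{-1}\}$. Each is closed, being the preimage of a point under a continuous map into the Hausdorff group $G_i$, and by the previous paragraph together with surjectivity of $\pi_i$ (which lets one lift a conjugating element of $G_i$ to $G$) it is nonempty in the SMP case, respectively has nonempty union $C_i\cup C_i^{-}$ in the MP case. Moreover, applying the bonding map $\varphi_{ij}$ shows $C_j\subseteq C_i$ and $C_j^{-}\subseteq C_i^{-}$ whenever $i\le j$, so that each of $\{C_i\}$ and $\{C_i^{-}\}$ is a downward directed family of closed sets. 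In the SMP case, compactness of $G$ then gives $\bigcap_i C_i\neq\emptyset$, and any $g$ in this intersection satisfies $\pi_i(x^g)=\pi_i(y)$ for all $i$, whence $x^g=y$ because $\bigcap_i\ker\pi_i=1$.

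The one point needing care is the MP dichotomy, where the choice between conjugate and inverse\nobreakdash-conjugate might a priori vary with $i$, so that intersecting the unions $C_i\cup C_i^{-}$ naively would only produce, for a single $g$, an $i$\nobreakdash-dependent alternative. I would resolve this by contradiction: if both $\bigcap_i C_i$ and $\bigcap_i C_i^{-}$ were empty then, each being a downward directed family of closed subsets of the compact group $G$, some $C_{i_1}$ and some $C_{i_2}^{-}$ would already be empty; choosing $i\ge i_1,i_2$ forces $C_i=C_i^{-}=\emptyset$, contradicting $C_i\cup C_i^{-}\neq\emptyset$. Hence one of the two intersections is nonempty, yielding a single $g\in G$ with $x^g=y$ or with $x^g=y^{-1}$, which is exactly the MP conclusion. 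This directed\nobreakdash-family contradiction is the crux; everything else is routine once surjectivity of the $\pi_i$ is in place.
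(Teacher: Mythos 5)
Your gluing argument is correct and in fact supplies a detail the paper leaves implicit: the sets $C_i$ and $C_i^{-}$ are closed, form downward directed families, and your contradiction argument (if both intersections were empty then some single $C_i\cup C_i^{-}$ would already be empty) is exactly the right way to handle the $i$-dependent choice between conjugate and inverse-conjugate. By working directly with the profinite groups $G_i$ you also bypass the paper's two-stage reduction to finite (S)MP groups, which is a genuine streamlining of that part of the proof.

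The genuine gap is the opening reduction. You assert it is ``no loss'' to assume the bonding maps are surjective, while simultaneously observing that for a general system the images $\pi_i(G)\le G_i$ need not be (S)MP --- and that observation is precisely why the reduction is not free. Without surjectivity your argument breaks at the nonemptiness of $C_i\cup C_i^{-}$: although $\langle\pi_i(x)\rangle^{G_i}=\langle\pi_i(y)\rangle^{G_i}$ still holds, the element of $G_i$ conjugating $\pi_i(x)$ to $\pi_i(y)^{\pm 1}$ may lie outside the closed subgroup $\pi_i(G)$ and therefore need not lift to $G$. The paper closes this hole in two steps: it first decomposes each profinite $G_i$ into finite (S)MP quotients (legitimate by Theorem \ref{t:quo}) and assembles these into a single inverse system of finite (S)MP groups; it then observes that each image $\phi_i(G)$ in that system, being finite, receives a factorization $\phi_i=\rho_i'\phi_k$ through some $G_k$, so that $\phi_i(G)=\rho_i'(G_k)$ is a quotient of a finite (S)MP group and hence (S)MP, again by Theorem \ref{t:quo}. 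Some such appeal to the quotient theorem seems unavoidable; as written, your proof establishes the theorem only for surjective inverse systems.
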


Recall that a profinite group is finitely generated if it contains a finitely generated abstract group which is dense. We finally establish the following result.

\begin{teor} \label{t:ff}
Every finitely generated profinite MP group is finite.
\end{teor}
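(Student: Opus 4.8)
The plan is to combine the three reductions already in hand---prosolvability (Theorem \ref{t:profMPpros}), stability of (S)MP under quotients (Theorem \ref{t:quo}), and the pronilpotency of $G^{(2)}$ (Corollary \ref{c:sdpn})---with one elementary but decisive observation about the Magnus property, which I will call the \emph{scaling principle}: if $x\in G$ and $u$ is a profinite unit coprime to the order of $x$, then $x$ and $x^{u}$ topologically generate the same procyclic subgroup $\overline{\langle x\rangle}$, hence have the same normal closure; so MP forces $x^{u}$ to be conjugate in $G$ to $x$ or to $x^{-1}$. Thus whenever $\overline{\langle x\rangle}$ is infinite, the \emph{whole} set $\{x^{u}:u\in\mathbb Z_{p}^{\times}\}$ must collapse into the two classes $x^{G}\cup(x^{-1})^{G}$, which is a severe restriction.

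First I would dispose of $G/G^{(2)}$. Let $V$ be any abelian normal section of a quotient $\widehat G$ of $G$ whose centralizer in $\widehat G$ is open, so that $\widehat G$ acts on $V$ through a finite group $Q$; then for $a\in V$ of infinite additive order and any unit $u\in\mathbb Z_{p}^{\times}$ the element $ua$ generates the same $\mathbb Z_{p}[Q]$-submodule as $a$, hence has the same normal closure, while $a$ has only finitely many conjugates. By Theorem \ref{t:quo} the group $\widehat G$ is MP, so the scaling principle forces $V$ to be finite. Applying this to $V=G^{\mathrm{ab}}$ (trivial action) shows $G^{\mathrm{ab}}$ is finite; since $G/G^{(2)}$ is metabelian with finite abelianization it is abelian-by-finite, and a second application to its abelian normal subgroup of finite index shows $G/G^{(2)}$ is itself finite. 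Hence $N:=G^{(2)}$ is open, therefore topologically finitely generated, pronilpotent, with $G/N$ finite; by Corollary \ref{c:pdivisors} it splits as $N=\prod_{p\in\{2,3,5,7\}}P_{p}$ into finitely many finitely generated Sylow pro-$p$ subgroups, each normal in $G$. It remains to prove that each $P_{p}$ is finite.

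Fix $p$ and suppose $P:=P_{p}$ is infinite. Since the other Sylow subgroups centralize $P$ and $G/N$ is finite, the image of $G$ in $\operatorname{Out}(P)$ is finite. Consequently, for $x\in P$ the action of $N_{G}(\langle x\rangle)$ on the cyclic group $\langle x\rangle$ has an ``external'' part of bounded order dividing $|G/N|$ together with an ``internal'' part coming from $P$, which is a $p$-group and so lands in the Sylow $p$-subgroup of $\operatorname{Aut}(\langle x\rangle)$. By the scaling principle, MP forces this image, enlarged by $\{\pm1\}$, to exhaust $(\mathbb Z/p^{m})^{\times}$ when $x$ has order $p^{m}$, and to exhaust $\mathbb Z_{p}^{\times}$ when $x$ has infinite order. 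The prime-to-$p$ factor $C_{p-1}$ is harmless (covered by the bounded external part, as in the finite MP groups containing $5$ and $7$); the real burden falls on the pro-$p$ factor $1+p\mathbb Z_{p}$, respectively $C_{p^{m-1}}$, which must be covered up to bounded index by the internal image. An element of infinite order produces a $\mathbb Z_{p}$ on which this fails outright for $p\ge5$, and after accounting for the finite external symmetry for $p\in\{2,3\}$, so $P$ is torsion; being finitely generated and torsion it has unbounded exponent by the positive solution of the restricted Burnside problem, whence it contains elements of order $p^{m}$ with $m\to\infty$, each demanding that some $g\in P$ induce on $\langle x\rangle$ an automorphism of order close to $p^{m-1}$.

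The main obstacle is precisely this torsion case: I must rule out that a finitely generated pro-$p$ group sustains, at unboundedly many levels, the rich automorphic action on cyclic subgroups that MP demands (the Nottingham group is the shape of the danger, being finitely generated, infinite and of finite abelianization). The plan is a descent: the acting element $g$ above is itself subject to the scaling principle, so $\langle g\rangle$ must in turn be covered up to $\pm1$, forcing a further $g'\in P$ with the same largeness property, and so on. Each step consumes a fixed quantum of the finite external symmetry while requiring a genuinely new internal automorphism, and I expect a counting argument against $d(P)<\infty$---or equivalently against the finiteness of the image of $G$ in $\operatorname{Out}(P)$ together with the finiteness of $P^{\mathrm{ab}}$ established above---to yield the contradiction. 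Once every $P_{p}$ is finite, $N=G^{(2)}$ is finite, and since $G/G^{(2)}$ is finite we conclude that $G$ is finite.
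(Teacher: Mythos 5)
Your proposal has a genuine gap, and you acknowledge it yourself: the entire torsion pro-$p$ case --- which you correctly identify as the heart of the matter, with the Nottingham group as ``the shape of the danger'' --- is left as a hoped-for ``counting argument against $d(P)<\infty$'' that is never carried out. A descent of the kind you sketch (each scaling automorphism is realized by some $g$, which is itself subject to the scaling principle, forcing a further $g'$, and so on) does not obviously terminate or produce a contradiction: nothing prevents the required automorphisms from being realized by inner automorphisms of $P$ at every level, and the ``fixed quantum of external symmetry'' consumed at each step is not quantified. Even your infinite-order case is asserted rather than proved: for $p\geq 5$ the image of $N_G(\overline{\langle x\rangle})$ in $\Aut(\overline{\langle x\rangle})\cong C_{p-1}\times\mathbb{Z}_p$ is only required to be a pro-$\{2,3,5,7\}$ group covering $\mathbb{Z}_p^{\times}$ up to $\pm1$, and a pro-$p$ internal part can in principle surject onto $1+p\mathbb{Z}_p$ while a finite external part supplies $C_{p-1}$; no contradiction ``fails outright'' without further argument. (Your preliminary reduction --- the scaling principle applied to $G^{\mathrm{ab}}$ and then to $G^{(1)}/G^{(2)}$ to show $G/G^{(2)}$ is finite --- does appear sound, modulo the standard fact that a finitely generated torsion abelian profinite group is finite.)

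The paper's proof bypasses all of this local analysis with two external results. Since only finitely many primes divide $|G|$ (Corollary \ref{c:pdivisors}), an infinite finitely generated profinite group $G$ would admit a just infinite quotient $G_J$ by \cite[Theorem 13]{Z}; such a $G_J$ has only countably many closed normal subgroups, yet is MP by Theorem \ref{t:quo}, so the map sending a conjugacy class (up to inversion) to its normal closure shows $G_J$ has only countably many conjugacy classes --- contradicting the theorem of Jaikin-Zapirain and Nikolov \cite{JN} that an infinite profinite group has uncountably many conjugacy classes. If you want to salvage your approach you would need to supply the missing argument for finitely generated torsion pro-$p$ MP groups, which is essentially as hard as the theorem itself.
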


Recall that an infinite group is just infinite if each of it proper quotients is finite.  The proof of Theorem \ref{t:ff} uses Theorem \ref{t:quo} and Corollary \ref{c:pdivisors}, as well as the following two  important results. The first one, established by the second author in \cite{Z}, is that an infinite finitely generated profinite group $G$ with only finitely many primes dividing $|G|$ admits just infinite quotients. The second one, established by Jaikin-Zapirain and Nikolov in \cite{JN} is that an infinite compact Hausdorff group, and so an infinite profinite group, has uncountably many conjugacy classes. 

We end the introduction with the following two open problems. Recall that a  group is locally finite if every subgroup which is finitely generated as an abstract group is finite.
\begin{problem}
Is every profinite MP group locally finite?
\end{problem}

\begin{problem}
Let $d$ be a positive integer. Can one bound the order of a finite $d$-generated MP group in terms of $d$? 
\end{problem}

By the solution to the restricted Burnside problem due to Zel'manov  in \cite{Ze1,Ze2}, the above problem is equivalent to determining whether, given  a positive integer $d$, one can bound the exponent of a finite $d$-generated MP group in terms of $d$.

The layout of the article is a follows. 
In Section \ref{s:two} we prove Theorem \ref{t:profMPpros}. In Section \ref{s:three} we prove Theorem \ref{t:quo} and Corollaries \ref{c:pdivisors} and \ref{c:sdpn}. In Section \ref{s:proMP} we prove Theorem \ref{t:proMP}. Finally in Section \ref{s:four} we prove Theorem \ref{t:ff}.

\section{The proof of Theorem \ref{t:profMPpros}}\label{s:two}

In this section we prove Theorem \ref{t:profMPpros}.

\noindent \textit{Proof of Theorem \ref{t:profMPpros}.}
Let $G$ be a profinite MP group. Suppose for a contradiction that $G$ is not prosolvable. Let $G/K$ be a maximal prosolvable quotient of $G$. Since $G$ is not prosolvable, $K$ is nontrivial.  Let $M(K)$ be the intersection of all  maximal closed normal subgroups of $K$. Then $M(K)$ is a closed characteristic subgroup of $K$ and so $M(K)$ is a closed normal subgroup of $G$.  It follows from \cite[Corollary 8.2.3]{RZ} that $K/M(K)$ is a direct product of finite nonabelian simple groups and so $K/M(K)$ is 1-generated as a $G$-group. In fact every element of $K/M(K)$ with nontrivial component on each finite nonabelian simple factor is a generator of $K/M(K)$ as  a  $G$-group.  Moreover any lift to $K$ of a generator of $K/M(K)$ as a $G$-group is a generator of $K$ as a $G$-group (see \cite[Proposition 8.3.6]{RZ}). Since $G$ is an MP group all such lifts are all conjugate or inverse-conjugate in $G$. It follows that all generators of $K/M(K)$ as a $G$-group are conjugate or inverse-conjugate in $G/M(K)$. However since $K/M(K)$ is a direct product of finite nonabelian simple groups, one can find two distinct primes $p$ and $q$ and two such generators $x$ and $y$ of $K/M(K)$ such that their projection onto the first finite nonabelian simple factor $S$ lie respectively in a $p$-Sylow subgroup and a $q$-Sylow subgroup of $S$. In particular $x$ and $y$ are not conjugate nor inverse-conjugate in $G/M(K)$, a contradiction.  $\square$

\section{The proofs of Theorem \ref{t:quo} and its corollaries}\label{s:three}

In this section we prove Theorem \ref{t:quo} and Corollaries \ref{c:pdivisors} and \ref{c:sdpn}. We start with a few results needed for the proof of Theorem \ref{t:quo}. 

\begin{lemma}(Nakayama's lemma)
Let $R$ be a unital associative ring and let $J(R)$ be the Jacobson radical of $R$.  Let $M$ be an $R$-module. The following assertions hold. 
\begin{enumerate}[(i)]
\item If $M$ is  finitely generated as an $R$-module and $J(R)M=M$ then $M=0$.
\item Let $U$ be an $R$-submodule of $M$. If $M/U$ is  finitely generated as an $R$-module and $M=U+J(R)M$, then $U=M$. 
\end{enumerate}
\end{lemma}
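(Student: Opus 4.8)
The plan is to treat this as the standard Nakayama argument, deriving (ii) from (i) by passing to the quotient module. The single nontrivial input I would invoke is the defining property of the Jacobson radical: for every $r \in J(R)$ the element $1 - r$ is a (two-sided) unit of $R$. Everything else is elementary module theory.

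For part (i), I would argue by contradiction using a generating set of minimal size. Suppose $M \neq 0$ and let $m_1, \dots, m_n$ be a generating set of $M$ as an $R$-module with $n \geq 1$ chosen as small as possible. Since $J(R)$ is a two-sided ideal, one checks that $J(R)M = \sum_{i=1}^n J(R) m_i$, so the hypothesis $J(R)M = M$ forces $m_n \in \sum_{i=1}^n J(R) m_i$; write $m_n = \sum_{i=1}^n a_i m_i$ with each $a_i \in J(R)$. Rearranging gives $(1 - a_n) m_n = \sum_{i=1}^{n-1} a_i m_i$. Now $1 - a_n$ is a unit because $a_n \in J(R)$, so applying its inverse on the left expresses $m_n$ as an $R$-combination of $m_1, \dots, m_{n-1}$ (the empty combination when $n=1$, which already gives $m_1 = 0$). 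Hence $m_1, \dots, m_{n-1}$ generate $M$, contradicting the minimality of $n$. Therefore $M = 0$.

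For part (ii), I would simply apply (i) to the quotient module $M/U$. This quotient is finitely generated over $R$ by hypothesis, and the identity $J(R)(M/U) = (J(R)M + U)/U$ combined with the hypothesis $M = U + J(R)M$ shows that $J(R)(M/U) = M/U$. Part (i) then yields $M/U = 0$, that is, $U = M$.

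The argument is routine, so I do not expect a genuine obstacle; the only point requiring care is the noncommutative bookkeeping. In particular, I must ensure that inverting $1 - a_n$ on the correct side is legitimate, which is precisely why the unit property of $1 - r$ for $r \in J(R)$ is the right fact to quote, and why recording the equality $J(R)M = \sum_i J(R) m_i$ explicitly (rather than treating $J(R)M$ loosely) matters.
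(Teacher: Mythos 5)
Your argument is the standard textbook proof of Nakayama's lemma and is correct: part (i) via a minimal generating set together with the fact that $1-a$ is a unit for $a\in J(R)$, and part (ii) by applying (i) to $M/U$. The paper states this lemma as a classical result without proof, so there is nothing to compare against; your write-up, including the care taken with the noncommutative identity $J(R)M=\sum_i J(R)m_i$ and with left-inverting $1-a_n$, fills that gap correctly.
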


Recall that given a ring $R$, an $R$-module $V$ is said to be cyclic if $V=Rv$ for some $v\in V$, and $V$ is said to be homogeneous if $V$ is a direct sum of simple $R$-submodules isomorphic to each other.    

\begin{lemma}\label{l:first}
Let $G$ be a finite group, let $p$ be a prime, and let  $V$ be a semisimple homogeneous cyclic $\mathbb{F}_pG$-module. 
Let $W$ be a quotient module of $V$. Then every generator of $W$
lifts to a generator of $V$.
\end{lemma}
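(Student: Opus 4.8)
The plan is to reduce the statement to a concrete problem in linear algebra over a finite field by exploiting the Wedderburn structure behind a homogeneous semisimple module. Write $V\cong S^{n}$, where $S$ is the simple $\mathbb{F}_pG$-module of which $V$ is isotypic, and set $D=\operatorname{End}_{\mathbb{F}_pG}(S)$, a finite division ring and hence (by Wedderburn's little theorem) a finite field, together with $r=\dim_D S$. The action of $\mathbb{F}_pG$ on $V$ factors through the simple Artinian quotient $A\cong M_r(D)$, with $S\cong D^{r}$ (column vectors) and $A$ acting by left multiplication. I would then identify $V=S^{n}$ with the space $M_{r\times n}(D)$ of $r\times n$ matrices over $D$, with $A$ acting by left multiplication on the columns.

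Under this identification all the relevant notions become rank conditions, which I would record first. An element $v\in M_{r\times n}(D)$ generates $V$ as an $A$-module if and only if its columns are $D$-linearly independent, i.e.\ $\operatorname{rank}v=n$; in particular $V$ is cyclic if and only if $n\le r$, which is guaranteed by the hypothesis. Next, $\operatorname{Hom}_{\mathbb{F}_pG}(S^{n},S^{m})$ is realised as right multiplication by matrices in $M_{n\times m}(D)$, a surjection corresponding to a matrix of full row rank; hence the quotient map $\pi\colon V\to W\cong S^{m}$ is given by $v\mapsto vB$ for some $B\in M_{n\times m}(D)$ with $\operatorname{rank}B=m$, and a generator $\bar w$ of $W$ is exactly a matrix in $M_{r\times m}(D)$ of full column rank $m$. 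The lemma thus reduces to the following: given $B\in M_{n\times m}(D)$ of rank $m$ and $\bar w\in M_{r\times m}(D)$ of rank $m$ with $n\le r$, there exists $v\in M_{r\times n}(D)$ with $vB=\bar w$ and $\operatorname{rank}v=n$.

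To solve this I would work one row at a time. The equation $vB=\bar w$ decouples across rows: the $i$-th row of $v$ must lie in the affine subspace $\{y\in D^{n}:yB=\bar w_{(i)}\}$, a coset of the common $(n-m)$-dimensional left kernel $K$ of $B$. Because $\bar w$ has full column rank, its rows span $D^{m}$, and I would first check that the $r$ cosets jointly span $D^{n}$: their images under $D^{n}\to D^{n}/K$ are the rows of $\bar w$ (up to the identification $D^{n}/K\cong D^{m}$ induced by $B$), which span, so the union of the cosets spans $D^{n}$ as well. It then remains to select one representative from each coset so that the chosen rows span $D^{n}$. Here I would argue constructively: using that the rows of $\bar w$ span $D^{m}$, pick $m$ indices whose particular solutions project to a basis of $D^{n}/K$ and keep those representatives; then, using $n-m$ of the remaining indices (available since $r\ge n$), shift each representative freely within $K$ so that the corresponding vectors recover a full basis of $K$. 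The resulting matrix $v$ has full column rank $n$ and satisfies $vB=\bar w$, giving the desired lift.

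The conceptual heart of the proof is the translation step rather than the final selection: the main thing to get right is the Wedderburn reduction together with the side conventions for $\operatorname{Hom}$, so that \emph{generator} corresponds precisely to \emph{full column rank} and the quotient map is realised as right multiplication by a full-row-rank matrix. Once this dictionary is in place, the remaining selection argument is elementary linear algebra, and, crucially, it is fully constructive; in particular it is insensitive to $D$ being finite, so no counting or genericity argument over the finite field is needed.
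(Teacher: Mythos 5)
Your proposal is correct, and its first step coincides with the paper's: both proofs invoke Artin--Wedderburn (plus Wedderburn's little theorem, implicitly in the paper via the field $\mathbb{F}_{p^\ell}$) to replace $\mathbb{F}_pG$ by a single matrix algebra $M_r(D)$ over a finite field acting on the homogeneous module, so that $V\cong S^n$, $W\cong S^m$ and everything becomes linear algebra. Where you diverge is in the finishing argument. The paper picks a complement, writing $V=V_1\oplus V_2$ with $V_1\cong W$, decomposes a fixed generator $v=v_1+v_2$, observes that $Kv_1=V_1$, and then sends $v_1$ to the prescribed generator $v_1'$ of $V_1$ by an \emph{invertible} element $k$ of the matrix algebra; $kv=v_1'+kv_2$ is then the desired lift in three lines. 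You instead make the identification $V\cong M_{r\times n}(D)$ fully explicit, encode the quotient map as right multiplication by a rank-$m$ matrix $B$, and solve $vB=\bar w$ subject to $\operatorname{rank}v=n$ by choosing one representative from each of the $r$ solution cosets of the kernel $K$ of $B$ so that the chosen rows span $D^n$. Your coset-selection step is sound (the $m$ particular solutions whose images form a basis of $D^n/K$, adjusted by $n-m$ further representatives shifted to recover a basis of $K$, do span $D^n$, using $r\ge n$), and it has the merit of being completely constructive; but it is doing by hand what the paper gets for free from the transitivity of the unit group of $M_r(D)$ on generators of a cyclic module. One cosmetic remark: for the $n\times m$ matrix $B$ with $n\ge m$, surjectivity of $y\mapsto yB$ is the condition $\operatorname{rank}B=m$, which you correctly use, though calling it ``full row rank'' is a slight abuse; since row rank equals column rank this does not affect the argument.
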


\begin{proof}
Let $\phi$ be the representation of the group algebra $\mathbb{F}_pG$ afforded by $V$
and let $ K=\phi(\mathbb{F}_pG)$. Since $V$ is semisimple, it follows from the Artin-Wedderburn theorem that $K$ is a direct product of matrix algebras of the form $M_n(\mathbb{F}_{p^\ell})$ for some positive integers $n,\ell$. As $V$ is homogeneous, it follows that $K=M_n(\mathbb{F}_{p^\ell})$, and one can view $V$ as a homogeneous $K$-module.
Clearly, $W$ is also homogeneous and cyclic as a $K$-module.

We can write $V=V_1\oplus V_2$, where $V_1\cong W$ and $V_2$ is some complement. Since $V$ is cyclic as a $K$-module, we can  write $V=Kv$ for some $v\in V$. Write also  $v=v_1+v_2$, where $v_i\in V_i$ for $i\in \{1,2\}$. Then $Kv_i=V_i$ (for $i\in \{1,2\}$)
as $Kv=V$.

Let $v_1'$ be any generator of $V_1$. Then $kv_1=v_1'$ for some invertible element $k\in K$. Moreover $kv=v_1'+kv_2$ is a generator of $V$ as a $K$-module, since $k$ is invertible. Hence $v_1'$ lifts to a generator $kv$ of $V$ as required.
\end{proof}

\begin{lemma}\label{l:second}
Let $G$ be a group and  let $p$ be a prime. Suppose that $f: M\rightarrow N$ is an epimorphism of nonzero  finite cyclic  $\mathbb{F}_pG$-modules. Then a generator of $N$ lifts to a generator of $M$. 
\end{lemma}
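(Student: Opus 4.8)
The plan is to reduce all generation questions to the semisimple quotient via Nakayama's lemma and then invoke Lemma \ref{l:first} on the homogeneous components. First I would reduce to the case where $G$ is finite: since $M$ and $N$ are finite and $f$ is $G$-equivariant, the $G$-action on $M\oplus N$ factors through the finite image $\bar G$ of $G$ in $\Aut(M\oplus N)$. The $\mathbb{F}_p G$-submodules and generators of $M$ and $N$ coincide with the $\mathbb{F}_p\bar G$-ones, so I may assume $R:=\mathbb{F}_p G$ is a finite-dimensional $\mathbb{F}_p$-algebra, with Jacobson radical $J:=J(R)$.

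The crucial reduction is the observation that an element $m\in M$ generates $M$ as an $R$-module if and only if its image $\bar m$ in $M/JM$ generates $M/JM$: the forward direction is clear, and the converse follows from $M=Rm+JM$ together with part (ii) of Nakayama's lemma, using that $M/Rm$ is finite, hence finitely generated. Applying this to both $M$ and $N$, it suffices to lift a generator of $N/JN$ to a generator of $M/JM$. Since $f$ is a surjective $R$-map, one has $f(JM)=Jf(M)=JN$, so $f$ descends to a surjection $\bar f\colon M/JM\to N/JN$ of semisimple $R$-modules.

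Next I would pass to isotypic components. Writing $M/JM=\bigoplus_i U_i$ and $N/JN=\bigoplus_i W_i$ for the decompositions into homogeneous components indexed by the isomorphism classes of simple $R$-modules, any $R$-homomorphism carries the $i$-th isotypic part into the $i$-th isotypic part, so $\bar f$ restricts to surjections $\bar f_i\colon U_i\to W_i$. Using the central idempotents of the semisimple algebra $R/J$ projecting onto the blocks, cyclicity of $M/JM$ forces each $U_i$ to be cyclic, and a generator $\bar n$ of $N/JN$ has each component $\bar n_i$ generating $W_i$. Since each $U_i$ is a semisimple homogeneous cyclic module and $W_i$ is a quotient of it, Lemma \ref{l:first} lifts $\bar n_i$ to a generator $\bar m_i$ of $U_i$; then $\bar m:=\sum_i\bar m_i$ generates $M/JM$ and satisfies $\bar f(\bar m)=\bar n$.

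Finally I would lift $\bar m$ back to $M$ compatibly with $f$. Choosing any preimage $m_0\in M$ of $\bar m$, one has $f(m_0)\equiv n\pmod{JN}$, and since $JN=f(JM)$ there is $j\in JM$ with $f(m_0-j)=n$; as $j\in JM$, the element $m:=m_0-j$ still reduces to $\bar m$ modulo $JM$, hence generates $M$ by the Nakayama reduction, while $f(m)=n$. I expect the main technical point to be the bookkeeping at the semisimple level — verifying that $\bar f$ respects the isotypic decomposition and that cyclicity descends to each homogeneous component — since the remaining steps are either Nakayama's lemma or a direct application of Lemma \ref{l:first}.
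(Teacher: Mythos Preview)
Your proposal is correct and follows essentially the same route as the paper: pass to the semisimple quotients $M/JM\twoheadrightarrow N/JN$ via Nakayama, decompose into homogeneous (isotypic) components, apply Lemma~\ref{l:first} componentwise to lift the generator, and then adjust the preimage by an element of $JM$ using $f(JM)=JN$. The only extra step is your preliminary reduction to finite $G$, which the paper leaves implicit; it is harmless since the proof of Lemma~\ref{l:first} only uses that the module is finite.
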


\begin{proof}
Write $R=\mathbb{F}_pG$ and let $J=J(R)$ be the Jacobson radical of $R$.
We obtain an epimorphism $\phi: M/JM\rightarrow N/JN$ of  finite nonzero cyclic $R$-modules. 
Now $M/JM$ and $N/JN$ are semisimple. Set $V=M/JM$.  Then $V=W\oplus U$ where $W\cong N/JN$ and $U$ is some complement.   Let $\psi:V\rightarrow W$ be the canonical epimorphism. 
Write $$V=\bigoplus_{\alpha\in A} V_\alpha$$ and  $$W=\bigoplus_{\beta\in B}W_\beta$$ into  a direct sum of non-isomorphic homogeneous components. Note that the sets $A$ and $B$ are finite. Since $V$ and $W$ are cyclic, so are $V_\alpha$ and $W_\beta$ for every $\alpha\in A$ and every $\beta\in B$. \\ 
Let $w$ be any  generator of $W$. Write $w=\sum_{\beta\in B}w_\beta$ where $w_\beta\in W_\beta$ for every $\beta\in B$.  Since $W=Rw$, $W=\oplus_{\beta\in B}W_\beta$ and $W_\beta$ are all cyclic, it follows that $W_\beta=Rw_\beta$ for every $\beta\in B$. \\
 Fix $\beta \in B$. There exists $\alpha_\beta \in A$ such that  $\psi:V_{\alpha_\beta}\rightarrow W_\beta$ is an epimorphism. 
Since $V_{\alpha_\beta}$ and $W_\beta$ are finite, it follows from Lemma \ref{l:first} that there exists $v_{\alpha_\beta} \in V_{\alpha_\beta}$ such that  $v_{\alpha_\beta}$ generates $V_{\alpha_\beta}$ and $\psi(v_{\alpha_\beta})=w_\beta$. \\
Let 
$$A_W=\{\alpha \in A: \psi(V_\alpha)\neq0\}=\{\alpha\in A: \psi(V_\alpha)=W_\beta \ \textrm{for some} \ \beta \in B\}=\{\alpha_\beta\in A: \beta\in B\}$$ and $$A_U=\{\alpha \in A: \psi(V_{\alpha})= 0\}.$$
For every $\delta \in A_U$ choose a generator $v_\delta$ of $V_\delta$. Let $v=\sum_{\alpha_{\beta} \in A_W} v_{\alpha_\beta}+\sum_{\delta \in A_U} v_\delta$.   Then $V=Rv$ and $\psi(v)=w$. \\
In particular if $n+JN$ is any generator of $N/JN$ then there is an element $m+JM$ of $M/JM$ generating $M/JM$ and such that $\phi(m+JM)=f(m)
+JN=n+JN$.  It follows that $f(m)=n+n_1$ for some $n_1\in JN$. As $f: M\rightarrow N$ is surjective, $f(JM)=JN$ and there is an element $m'\in JM$ such that $f(m')=-n_1$. In particular $f(m+m')=n$ and $m+JM=m+m'+JM$. Now $M=\langle m+m'\rangle+JM$ and by Nakayama's lemma, it follows that $M=\langle m+m'\rangle$ with $f(m+m')=n$. 
\end{proof}

\begin{cor}\label{c:modules}
Let $G$ be a profinite group and let $p$ be a prime. Suppose that $f: M\rightarrow N$ is an epimorphism of nonzero  cyclic  $\mathbb{F}_pG$-modules where $M$ and $N$ are second countable. Then a generator of $N$ lifts to a generator of $M$. 
\end{cor}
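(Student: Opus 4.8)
The plan is to deduce Corollary \ref{c:modules} from the finite case treated in Lemma \ref{l:second} by an inverse limit argument, with second countability entering to guarantee a countable index set. First I would realise $M$ and $N$ as inverse limits of their finite continuous quotient modules. Being second countable profinite $\mathbb{F}_pG$-modules, each is an inverse limit over a countable directed set, which after passing to a cofinal chain I may take to be $\mathbb{N}$; write $M=\varprojlim_i M_i$ and $N=\varprojlim_i N_i$ with finite $\mathbb{F}_pG$-modules $M_i,N_i$ and surjective transition maps. Since $f$ is a continuous surjection, each composite $M\to N\to N_i$ factors through some $M_j$, so after passing to a common cofinal index I may assume $f=\varprojlim_i f_i$ for a compatible family of epimorphisms $f_i\colon M_i\to N_i$ (surjectivity of each $f_i$ following from that of $f$ and of the projection $N\to N_i$). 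Moreover, as $M$ and $N$ are cyclic, the image of a topological generator generates each $M_i$ and each $N_i$, so every $M_i$ and $N_i$ is a finite cyclic $\mathbb{F}_pG$-module, placing me exactly in the setting of Lemma \ref{l:second}.

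Next, I would fix a generator $n$ of $N$ and let $n_i$ denote its image in $N_i$, which is a generator of $N_i$. For each $i$ set $L_i=\{m_i\in M_i : m_i \text{ generates } M_i \text{ and } f_i(m_i)=n_i\}$. By Lemma \ref{l:second} each $L_i$ is nonempty, and it is finite since $M_i$ is finite. The key observation is that the transition map $M_{i+1}\to M_i$ carries $L_{i+1}$ into $L_i$: it sends a generator to a generator (the image of $\mathbb{F}_pG\,m_{i+1}$ is all of $M_i$), and it is compatible with the $f_i$ and with the images $n_{i+1}\mapsto n_i$. Thus the $L_i$ form an inverse system of nonempty finite sets.

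Finally, I would invoke the fact that the inverse limit of a countable inverse system of nonempty finite sets is nonempty; concretely this follows from K\"onig's lemma applied to the finitely branching tree whose level-$i$ vertices are the elements of $L_i$, which is infinite since every $L_i$ is nonempty. A point of $\varprojlim_i L_i$ is a compatible family $(m_i)$ defining an element $m\in M$ with $f(m)=n$, and $m$ generates $M$, because the closed $\mathbb{F}_pG$-submodule it generates surjects onto every $M_i$ and hence equals $M$. This $m$ is the required lift of $n$.

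I expect the main obstacle to be the bookkeeping of the first paragraph: arranging the two inverse systems together with the map $f$ over a single cofinal countable index set so that the induced $f_i$ are genuine epimorphisms between finite \emph{cyclic} modules. Once that reduction is in place the compactness step is routine, and second countability is precisely what makes the finitely branching tree argument available, letting one conclude with K\"onig's lemma rather than the full force of the axiom of choice.
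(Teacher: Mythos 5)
Your proposal is correct and follows essentially the same route as the paper: realise $M$ and $N$ as inverse limits of finite cyclic $\mathbb{F}_pG$-modules over a countable chain, apply Lemma \ref{l:second} at each finite level, and conclude by a compactness argument. The only (harmless) difference is organizational: the paper splits the compactness step in two, first treating finite $N$ by factoring $f$ through some $M_k$ and lifting step by step up the chain, then handling general $N$ as a nested intersection of nonempty closed sets of lifts in $M$, whereas you package both into a single inverse limit of the nonempty finite sets $L_i$ of generator-lifts.
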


\begin{proof}
Write $R=\mathbb{F}_pG$. Let $n$ be any generator of $N$. We first consider the case where $N$ is finite.  Write $$M=\varprojlim_{i\in \mathbb{N}} M_i$$ where each $M_i$ is a finite cyclic quotient $\mathbb{F}_pG$-module of $M$, let $\{M_i,\phi_{ij}\}$ be the corresponding (surjective) inverse system and for each $i\in \mathbb{N}$ let $\phi_i: M \rightarrow M_i$ be the projection map.\\
Since $N$ is finite, it now follows from \cite[Proposition 1.16(b)]{RZ} that $f$ factors through some $\phi_k$. In other words there exist some $k\in \mathbb{N}$ and a continuous map $f':M_k\rightarrow N$ such that $f=f'\phi_k$.  We have the following commutative diagram
\[ 
\xymatrix{
M  \ar[r]^{\phi_k} \ar[dr]_f & M_k \ar[d]^{f'}\\
& N
}
\]
 By Lemma \ref{l:second}, we can lift $n$ to a generator $m_k$ of $M_k$, and, since the inverse system is surjective, in turn to a generator $m_{k+1}$ of $M_{k+1}$ with $\phi_{k+1,k}(m_{k+1})=m_k$, and so on. For $1 \leq j\leq k-1$, let $m_j=\phi_{kj}(m_k)$. Then $m_j$ is a generator of $M_j$ for $1\leq j \leq k-1$ and the element $(m_i)_{i\in \mathbb{N}}$ is the required lift of $n$ to a generator of $M$.\\
 
 Suppose finally that $$N=\varprojlim_{i\in \mathbb{N}} N_i$$ where each $N_i$ is finite. 
 For each $i$, let $n_i$ be the projection of $n$ onto $N_i$, and let $S_i$ be the set of all liftings of $n_i$ to $M$. Each set $S_i$ is closed and $S_j\subseteq S_i$ for $j \geq i$. Set $S=\cap_{i \in \mathbb{N}} S_i$. As every $S_i$ is closed, $S\neq \emptyset.$ An element $s\in S$ is a needed lift of $n$ to a generator of $M$. 
\end{proof}

We can now prove Theorem \ref{t:quo}

\noindent \textit{Proof of Theorem \ref{t:quo}.}
Let $G$ be a profinite (S)MP group and let $U$ be a closed normal subgroup of $G$. We need to show that $G/U$ is MP. Write $\overline{G}=G/U$.  By Theorem \ref{t:profMPpros} $G$ is prosolvable and so $\overline{G}$ is also prosolvable.  Consider two elements $\overline{g}$ and $\overline{h}$
of $\overline{G}$ where $g,h\in G$ and $\langle \overline{g} \rangle^{\overline{G}}=\langle \overline{h}\rangle^{\overline{G}}$. Without loss of generality, $\langle \overline{g} \rangle^{\overline{G}}$ is nontrivial. Let $M(\langle g\rangle^G)$ and  $M(\langle \overline{h} \rangle^{\overline{G}})$  denote respectively the intersection of all  maximal closed normal subgroups of  $\langle g\rangle^G$ and $\langle \overline{h}\rangle^{\overline{G}}$. Consider the two prosolvable groups $\frac{\langle g\rangle^{G}}{M(\langle g \rangle^{G})}$ and $\frac{\langle \overline{h}\rangle^{\overline{G}}}{M(\langle \overline{h} \rangle^{\overline{G}})}$.  We have an epimorphism $$\frac{\langle g\rangle^{G}}{M(\langle g \rangle^{G})}\rightarrow \frac{\langle \overline{h}\rangle^{\overline{G}}}{M(\langle \overline{h} \rangle^{\overline{G}})}$$ of groups, and they 
respectively decompose as a direct product of simple modules, where each simple module is a $\mathbb{F}_pG$-module for some prime $p$. It follows from Corollary \ref{c:modules}  that without loss of generality $\langle h\rangle^GM(\langle g \rangle^G)=\langle g\rangle^G$ and so $\langle h\rangle^G=\langle g\rangle^G$. As $G$ is MP, $g$ and $h$ are conjugate or inverse-conjugate in $G$, moreover, when $G$ is SMP, $g$ and $h$ are conjugate in $G$. It follows that $\overline{g}$ and $\overline{h}$ are conjugate or inverse-conjugate in $\overline{G}$, moreover, when $G$ is SMP, $\overline{g}$ and $\overline{h}$ are conjugate in $\overline{G}$. This establishes the result. 
$\square$

We can now prove Corollaries \ref{c:pdivisors} and \ref{c:sdpn}.

\noindent \textit{Proof of Corollary \ref{c:pdivisors}.}
By Theorem \ref{t:quo} every quotient of  a profinite (S)MP group is (S)MP.  Also by \cite[Corollary 1.6]{GM} a prime divisor of the order of a finite MP group belongs to $\{2,3,5,7\}$, a prime divisor of the order of a finite SMP group or a finite nilpotent MP  belongs to $\{2,3\}$, and finally a prime divisor of the order of a finite nilpotent SMP group must be equal to 2. The result follows from the fact that a profinite group is the inverse limit of a surjective inverse system of finite groups. $\square$

\noindent \textit{Proof of Corollary \ref{c:sdpn}.}
By Theorem \ref{t:quo} every quotient of  a profinite MP group is MP.  Also by \cite{GM}  the second derived subgroup of a finite MP group is nilpotent. Since any epimorphism of groups sends the second derived subgroup onto the second derived subgroup,  the result follows. $\square$

\section{The proof of Theorem \ref{t:proMP}}\label{s:proMP}

In this section we prove Theorem \ref{t:proMP}.

\noindent \textit{Proof of Theorem \ref{t:proMP}.}
We first treat the case   $$G=\varprojlim_{i\in I} G_i$$ where each $G_i$ is a finite (S)MP group. Let $\{G_i,\phi_{ij}\}$ be a corresponding inverse system and for each $i\in I$, let $\phi_i: G \rightarrow G_i$ be the projection map. \\
We claim that without loss of generality we can assume that the latter inverse system is surjective.   For each $i\in I$, let $G_i'=\phi_i(G)$. For all $i,j\in I$ with $i\succcurlyeq j$, $\phi_{ij}\phi_i=\phi_j$. Therefore the  maps $\phi_{ij}: G_i \rightarrow G_j$ restrict to maps $\phi_{ij}': G_i'\rightarrow G_j'$, and we obtain the inverse system $\{G_i',\phi_{ij}'\}$ where the transition maps $\phi_{ij}'$ are surjective. By construction 
$$ G=\varprojlim_{i\in I} G_i =\varprojlim_{i\in I} G_i'.$$
It remains to check that for each $i\in I$, $G_i'$ is (S)MP.  Fix $i \in I$ and consider the epimorphism $\phi_i: G \rightarrow  G_i'$. By Lemma \cite[Lemma 1.1.16(a)]{RZ} there exists some $k\in I$ and some mapping $\rho_i': G_k\rightarrow G_i'$ such that $\phi_i=\rho_i'\phi_k$. In particular $G_i'=\rho_i'(G_k)$ and as $G_k$ is a finite (S)MP group, so is $G_i'$ (by Theorem \ref{t:quo}). This establishes the claim. \\
 Let $x,y \in G$ be such that $\langle x\rangle^G=\langle y\rangle^G$. 
Fix $i\in I$. Note that $\phi_i: G \rightarrow G_i$ is surjective and $\langle \phi_i(x)\rangle^{G_i}=\langle \phi_i(y)\rangle^{G_i}$. Since $G_i$ is MP, it follows that $\phi_i(x)$ and $\phi_i(y)$ are conjugate or inverse-conjugate in $G_i$, moreover if $G_i$ is SMP then  $\phi_i(x)$ and $\phi_i(y)$ are conjugate in $G_i$. Hence $x$ and $y$ are conjugate or inverse-conjugate in $G$, moreover if $G$ is a pro-SMP group then $x$ and $y$ are conjugate in $G$. The result follows in this case.

Now we consider the general case. Let $$G=\varprojlim_{i\in I} G_i$$ be an inverse system of profinite (S)MP groups. Decompose each $G_i$ as an inverse limit $G_i=\varprojlim_{i_j\in J_i} G_{i_j}$ of finite (S)MP-groups. By the above it suffices to show that $\{G_{i_j}\mid i\in I, i_j\in J_i\}$ forms an inverse system. To check this, we have to complete any diagram 
$$\xymatrix{&G_{i_j}\ar[d]\\
 G_{\ell_k}\ar[r]& G_{s_r}}$$ to a commutative diagram $$\xymatrix{G_{n_m}\ar[r]\ar[d]&G_{i_j}\ar[d]\\
 G_{{\ell}_k}\ar[r]& G_{s_r}}$$ 
 
 Choose $n\succcurlyeq i,\ell$. Then we have a commutative diagram 
 $$\xymatrix{G_n\ar[r]^{f_{i_j}}\ar[d]^{f_{\ell_k}}&G_{i_j}\ar[d]\\
 G_{{\ell}_k}\ar[r]& G_{s_r}}.$$ 
 Put $N=\mathrm{Ker}(f_{i_j})\cap \mathrm{Ker}(f_{\ell_k})$. Then $G_n/N$ is a finite (S)MP group and we have the following commutative diagram
 
 $$\xymatrix{G_n/N\ar[r]^{f_{i_j}}\ar[d]^{f_{{\ell}_k}}&G_{i_j}\ar[d]\\
 G_{{\ell}_k}\ar[r]& G_{s_r}}$$ as required.
$\square$

\section{The proof of Theorem \ref{t:ff}}\label{s:four}

In this section we prove Theorem \ref{t:ff}. 

\noindent \textit{Proof of Theorem \ref{t:ff}.} Let $G$ be a finitely generated profinite MP group. Suppose for a contradiction that $G$ is infinite. 
Recall that an infinite group is said to be just infinite if each of its proper quotients is finite. By \cite[Theorem 13]{Z}  an infinite finitely generated profinite group with order divisible by only finitely many primes admits just infinite quotients. Hence by Corollary \ref{c:pdivisors} $G$ admits  a just infinite quotient, say $G_J$. Note that  $G_J$ admits only countably many normal subgroups. Also by Theorem \ref{t:quo} $G_J$ is MP.   By \cite[Theorem 1.1]{JN}, every infinite profinite group has uncountably many conjugacy classes. Since $G_J$ is MP and admits only countably many normal subgroups, it now follows that $G_J$ is finite, a contradiction. 
$\square$

\medskip
\noindent {\it Acknowledgements}. The second author thanks Peter Symonds and  Alexandre Zalesski for useful discussions that led to the proofs of Lemmas \ref{l:first} and \ref{l:second}.

\end{document}